\documentclass[12pt]{article}
\usepackage{mathrsfs}

\usepackage{multirow}
\usepackage{booktabs}  
\usepackage{t1enc}
\usepackage[latin1]{inputenc}
\usepackage[english]{babel}
\usepackage{amsmath,amsthm}
\usepackage[ruled, lined, linesnumbered]{algorithm2e}
\usepackage{latexsym}
\usepackage{enumerate}
\usepackage{geometry}   
\usepackage{titlesec}   
\geometry{left=2.6cm,right=2.6cm,top=2.2cm,bottom=2.2cm}  
\usepackage{amssymb}

\usepackage{graphicx}
\usepackage[natural]{xcolor}
\DeclareGraphicsRule{.wmf}{bmp}{}{}


\theoremstyle{plain}
\newtheorem{thm}{Theorem}[section]


\newtheorem{conj}[thm]{Conjecture}
\newtheorem{claim}[thm]{Claim}

\numberwithin{theorem}{section}

\numberwithin{lemma}{section}
\newtheorem{corollary}{Corollary}
\numberwithin{corollary}{section}

\numberwithin{conjecture}{section}
\newtheorem*{conjecture*}{Conjecture}

\theoremstyle{definition}

\theoremstyle{remark}

%
\theoremstyle{plain}

\theoremstyle{plain}

\theoremstyle{plain}

\title{A note on antimagic orientations of even regular graphs\\}
\author{Donglei Yang\thanks{ Email address:  dlyang120@163.com (D. Yang)} \\
{ Department of Mathematics}\\
{ Shandong  University, Jinan,  China}\\
}
\date{}

\begin{document}
\maketitle
\begin{abstract}
\noindent Motivated by the conjecture of Hartsfield and Ringel
on antimagic labelings of undirected graphs, Hefetz, M\"{u}tze, and Schwartz
initiated the study of antimagic labelings of digraphs in 2010. Very recently, it has been conjectured in [Antimagic orientation of even regular graphs, J. Graph Theory, 90 (2019), 46-53.] that every graph admits an antimagtic orientation, which is a strengthening of an earlier conjecture of Hefetz, M\"{u}tze and Schwartz.
In this paper, we prove that every $2d$-regular graph (not necessarily connected) admits an antimagic orientation, where $d\ge2$. Together with known results, our main result implies that the above-mentioned conjecture is true for all regular graphs.
\end{abstract}
\noindent\textbf{Keywords}: Antimagic labeling; Antimagic orientation; Euler tour

\baselineskip 18pt
\section{Introduction}
All graphs in this paper are finite and simple.
For a graph $G$, we use $e(G)$ to denote the number of edges of $G$ and $d_G(u)$ to denote the degree of a vertex $u\in V(G)$. For a positive integer $k$, we denote $[k]:=\{1,2,\dots, k\}$. An \emph{antimagic labeling} of a graph $G$ is a bijection from $E(G)$ to $[e(G)]$ such that for any distinct vertices $u$ and $v$, the sum of labels on edges incident to $u$ differs from that for edges incident to $v$. A graph $G$ is antimagic if it has an antimagic labeling. Hartsfield and Ringel~\cite{NG} introduced antimagic labelings in 1990 and conjectured that every connected graph other than $K_2$  is  antimagic.  The first significant progress on this problem is a result of Alon, Kaplan, Lev, Roditty, and Yuster~\cite{Alon}, which states that there exists an absolute constant  $c$ such that  every graph on $n$ vertices with minimum degree at least $c\log n$  is antimagic. Later, Eccles~\cite{E} improved this by showing that there exists an absolute constant $c_0$ such that if $G$ is a graph with average
degree at least $c_0$, and $G$ contains no isolated edge and at most one isolated vertex, then $G$ is
antimagic. For some special classes of graphs, Hefetz~\cite{DH} proved that any graph on $n=3^k$ vertices that admits a triangle factor is antimagic. Later, Hefetz, Saluz and Tran \cite{DHA} generalized this by proving that any graph on $n=p^k$ vertices that admits a $C_p$-factor is antimagic, where $p$ is an odd prime.
For regular graphs, Cranston, Liang,  and Zhu~\cite{DYZ} proved that every odd regular graph is antimagic. Later, Chang, Liang, Pan, and Zhu~\cite{CLPZ} proved the even case.  For more result, we recommend a survey \cite{JAG}.

Motivated by antimagic labelings of graphs, Hefetz, M\"{u}tze, and Schwartz~\cite{DTJ} initiated the study of antimagic labelings of digraphs.  Let $D$ be a digraph and $A(D)$ and $V(D)$ be the set of arcs and vertices of $D$, respectively. An \emph{antimagic labeling} of $D$ is bijection from $A(D)$ to $\{1,2,\ldots,|A(D)|\}$ such that no two vertices in $D$ have the same vertex-sum,  where the vertex-sum of a vertex $u\in V(D)$ is the sum of labels of all arcs entering $u$ minus the sum of labels of all arcs leaving $u$.  A digraph is \emph{antimagic} if it has an antimagic labeling. A graph $G$ has an \emph{antimagic orientation} if an orientation of  $G$ is antimagic.

Hefetz, M\"{u}tze, and Schwartz~\cite{DTJ} raised the questions
``Is every orientation of any connected graph antimagic?'' and ``Does every graph admit an antimagic orientation?''. They proved an analogous result of Alon, Kaplan, Lev, Roditty, and Yuster~\cite{Alon} that there exists an absolute constant $c$ such that every orientation of any  graph on $n$ vertices with minimum degree at least $c\log n$ is antimagic. Except for $K_{1,2}$ and $K_3$, no other counterexample to the first question is known. 
For the second question, the same authors proposed the following conjecture.

\begin{conj}\emph{\cite{DTJ}}\label{conj1}
Every connected graph admits an antimagic orientation.
\end{conj}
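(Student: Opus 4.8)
The plan is to attack the conjecture by splitting the connected graph into a spanning tree together with its ``surplus'' edges, and to exploit the extra freedom that orientations give over undirected labelings: each arc contributes $+\ell$ to its head and $-\ell$ to its tail, so vertex-sums range over both signs and there is genuine room to separate them. Fix a spanning tree $T$ of the connected graph $G$, let $F = E(G)\setminus E(T)$ be the non-tree edges, and write $m = e(G)$. I would reserve the small labels $\{1,\dots,|F|\}$ for the edges of $F$ (oriented essentially arbitrarily) and the large labels $\{|F|+1,\dots,m\}$ for the edges of $T$. The hope is that $T$ can be oriented and labeled so that the tree-contribution to the vertex-sums splits into blocks separated by gaps larger than any perturbation the $F$-edges can inflict at a single vertex; then the $F$-labels only reshuffle sums within their blocks, where a separate argument keeps them distinct.

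For the tree part I would root $T$ at a vertex $r$ and process the vertices from the leaves toward $r$: orienting each pendant edge toward its leaf fixes that leaf's sum to be exactly the (large, distinct) label on its unique edge, and proceeding upward one assigns the remaining large labels greedily so that the partial sums at already-processed vertices are forced into a strictly increasing order. For the part of $G$ that is $2$-edge-connected or otherwise dense, I would instead lean on the Euler-tour technique that underlies this paper's regular-graph result: on an even subgraph each component admits an Euler tour, and orienting edges along the tour equalizes in- and out-degrees, which turns the signed vertex-sum problem into one resembling the undirected antimagic-labeling problem where cancellations can be controlled. Combining an Euler-tour treatment of an even spanning subgraph with a leaf-to-root treatment of a spanning tree of the remainder is the natural route toward the full statement.

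The hard part, and the reason the conjecture remains open in full generality, is the interaction between vertices of widely differing degrees, and especially the low-degree ones. A leaf has a single incident arc, so its sum is merely $\pm$ one label and cannot be adjusted at all once that label is placed; a degree-$2$ vertex offers only one sign choice together with the choice of its two labels. Such vertices leave essentially no slack for breaking ties, so the block-separation scheme is most delicate exactly where control is weakest: two leaves carrying nearby labels can be forced into collision by perturbations from the surplus edges elsewhere. I would try to absorb these residual ties by a global rather than a purely local argument --- for instance, after fixing all orientations, treating the distinctness conditions as the non-vanishing of a suitable polynomial in the labels and applying a Combinatorial-Nullstellensatz or permanent estimate in the spirit of Alon, Kaplan, Lev, Roditty, and Yuster, or a counting argument showing that only a negligible fraction of label assignments create a collision. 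Reconciling such an algebraic backstop with the rigidity of leaves and degree-$2$ vertices is, I expect, the central obstacle that any complete proof of the conjecture must overcome, which is why known progress has so far been confined to dense graphs and to structured degree sequences such as the regular case treated here.
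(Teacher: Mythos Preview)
The statement you are addressing is Conjecture~\ref{conj1}, not a theorem: the paper does not prove it, and indeed it remains open in general. The paper's actual contribution is Theorem~\ref{main}, the special case of $2d$-regular graphs, proved by an explicit Euler-tour orientation and a carefully ordered labelling algorithm. So there is no ``paper's own proof'' of this statement to compare against.

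Your write-up is not a proof either, and to your credit you say so: you sketch a spanning-tree plus surplus-edges decomposition, propose a leaf-to-root greedy labelling of the tree part and an Euler-tour treatment of an even part, and then correctly identify the obstruction --- leaves and degree-$2$ vertices offer essentially no slack, so the block-separation idea cannot be made to work without further input. The suggestion to fall back on a Combinatorial Nullstellensatz or permanent argument in the style of Alon et al.\ is reasonable as a research direction, but it is not an argument; the known algebraic results of that type require a minimum-degree hypothesis (of order $c\log n$) precisely because of the rigidity you describe, and nothing in your outline explains how to remove that hypothesis. In short, what you have written is a plausible plan of attack with an honestly acknowledged hole at the crucial point, which is the state of the art --- it is not a proof of Conjecture~\ref{conj1}, and the paper makes no claim to one.
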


They proved the following result on certain classes of regular graphs.
\begin{thm}\emph{\cite{DTJ}}\label{odd} For any integer $d\ge1$,
 \begin{description}
   \item[(a)] every $(2d-1)$-regular graph admits an antimagic orientation;
   \item[(b)] every connected, $2d$-regular graph $G$ admits an antimagic orientation if  $G$ has a matching that covers all but at most one vertex of $G$.
 \end{description}
\end{thm}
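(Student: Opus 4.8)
The plan is to prove both parts by the same two-phase philosophy: first use an \emph{Euler tour} to produce an orientation in which the vertex-sums are forced to be almost all equal, and then use a small, carefully chosen set of edges carrying the largest labels to break the resulting ties. The starting observation is the following calculation. If $H$ is a connected graph in which every vertex has even degree, orient $H$ along an Euler tour $v_0 e_1 v_1 \cdots e_t v_t = v_0$ and give $e_i$ the label $i$. Then every passage of the tour through a vertex $v$ contributes exactly $-1$ to the vertex-sum at $v$, so $s(v) = -\tfrac12 d_H(v)$ at every vertex except the root $v_0$, whose sum is $t - \tfrac12 d_H(v_0)$. Thus consecutive labelling along an Euler tour already makes \emph{all but one} of the vertex-sums coincide; the entire difficulty is to separate these equal values using the freedom that remains in the labelling.

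For part \textbf{(a)}, let $G$ be $(2d-1)$-regular; then $n=|V(G)|$ is even. I would add a single new vertex $u$ joined to every vertex of $G$, obtaining a connected graph $G^+$ in which $u$ has (even) degree $n$ and every original vertex has even degree $2d$. Since $G^+$ is simple and Eulerian, take an Euler tour of each component and orient $G^+$ accordingly; every original vertex then has in-degree equal to out-degree. Now reserve the $n$ largest labels $m+1,\dots,m+n$ (where $m=e(G)$) for the $n$ edges at $u$ and assign $1,\dots,m$ to the edges of $G$; deleting $u$ leaves exactly a bijection $E(G)\to[m]$ together with a balanced orientation of $G$. The point is that each vertex $v$ is incident with \emph{precisely one} edge $uv$, so it owns a private large label $\ell(uv)$; writing $s_G(v)=s_{G^+}(v)\mp\ell(uv)$, the bulk term $s_{G^+}(v)$ can be confined to a bounded range by the Euler-tour labelling of the small labels, while the private labels $\ell(uv)$ are distinct. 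A suitable orientation of the $u$-edges and spacing of the large labels then forces the sums $s_G(v)$ to be pairwise distinct.

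For part \textbf{(b)}, let $G$ be connected and $2d$-regular with a matching $M=\{x_iy_i\}$ covering all but at most one vertex. Here every vertex already has even degree, so I would instead \emph{contract} the matching: in $H=G/M$ each contracted vertex has even degree $4d-2$ and the possible unmatched vertex has even degree $2d$, so $H$ is connected and Eulerian. Orienting $G\setminus M$ along an Euler tour of $H$ and labelling those edges via the tour makes the \emph{pair-sums} $s(x_i)+s(y_i)$ almost all equal, by the calculation above applied to $H$. The matching edge $x_iy_i$ contributes $0$ to its pair-sum but, carrying one of the $k=|M|$ largest labels and oriented from $y_i$ to $x_i$, contributes $+2\ell(x_iy_i)$ to the within-pair difference $s(x_i)-s(y_i)$. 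Choosing distinct large labels on the matching edges is meant to spread the within-pair differences so that, combined with the nearly constant pair-sums, all $n$ vertex-sums become distinct.

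The step I expect to be the main obstacle is this separation within a matched pair in part \textbf{(b)}. The Euler tour of the contracted graph controls only the \emph{sum} $s(x_i)+s(y_i)$; the \emph{difference} $s(x_i)-s(y_i)$ also picks up the imbalance of the non-matching labels between $x_i$ and $y_i$, and since each of $x_i,y_i$ meets $2d-1$ non-matching edges, this imbalance is of the same order of magnitude as a single matching label and cannot simply be dominated by it. Overcoming this requires choosing the Euler tour (equivalently, the transition system pairing the edges at each contracted vertex) and the assignment of the small labels \emph{simultaneously}, so as to control both the pair-sums and the within-pair differences; making these two requirements compatible, while keeping all $n$ resulting sums pairwise distinct, is the technical heart of the argument. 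In part \textbf{(a)} the analogous issue is milder, since every vertex owns a private edge to $u$, which is why I would treat the odd-regular case by the universal-vertex construction rather than by deleting a matching.
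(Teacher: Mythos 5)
Theorem~\ref{odd} is quoted in this note from \cite{DTJ} and not proved here, so there is no in-paper proof to compare against; I assess your sketch on its own terms and against the Euler-tour machinery this paper does deploy in Section~2. Your part~(a) is essentially correct, but for a different reason than the one you give: once $u$ is deleted, the orientation of the $u$-edges and the ``spacing'' of the reserved labels $m+1,\dots,m+n$ influence nothing (those labels vanish from every vertex-sum), so there are no ``suitable'' choices left to make there. What actually closes the argument is simply labelling $E(G)$ with $1,\dots,m$ in the order of traversal of a single Euler tour of $G^{+}$ started at $u$ (note $G^{+}$ is connected, so one tour suffices, and $\deg u=n$ is even). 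Each of the $d-1$ visits to $v$ avoiding $u$ then contributes $-1$ to the sum at $v$; the unique visit using $uv$ contributes $+a_v$ if the tour exits $v$ towards $u$, and $-b_v$ if it enters $v$ from $u$, where $a_v$ (resp.\ $b_v$) is the label of the $G$-edge traversed just before (resp.\ after) that visit. Hence every vertex-sum equals $-(d-1)+a_v\ge -d+2$ or $-(d-1)-b_v\le -d$; within each family the values are distinct because the $a_v$'s (and the $b_v$'s) are labels of pairwise distinct edges, and the two families are separated by the sign of the correction. All $n$ sums are therefore automatically distinct --- no tuning of large labels or of $u$-edge orientations is needed or even possible.

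Part~(b) is where the genuine gap sits, and your final paragraph concedes it without repairing it. Labelling $G\setminus M$ consecutively along an Euler tour of $H=G/M$ makes the pair-sums not ``almost all equal'' but identically equal: each contracted vertex has degree $4d-2$, hence $2d-1$ visits each contributing $-1$, so $s(x_i)+s(y_i)=-(2d-1)$ for every pair except the one containing the tour's base point, while the matching edge contributes $+\ell_i-\ell_i=0$. All discrimination must therefore come from the differences $s(x_i)-s(y_i)=\pm 2\ell_i+\delta_i$, where $\delta_i$ is the imbalance of the $2(2d-1)$ small labels between $x_i$ and $y_i$. As you yourself observe, $\delta_i$ can be of order $m=dn$, whereas the reserved top labels differ pairwise by less than $2|M|\le n$; so distinct $\ell_i$'s cannot dominate the $\delta_i$'s, collisions within and across pairs are unavoidable without extra structure, and ``choosing the tour and the small labels simultaneously'' is not a proof step but the entire content of the theorem. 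The missing idea is a device that collapses each $\delta_i$ to a single controlled junction label --- for instance, constraining the transition system so that all but one visit to each contracted vertex pairs two edges on the same side of the matching edge (parity permits exactly one mixed visit), while still keeping the tour a single closed trail. Engineering exactly this kind of local control is what \cite{DTJ} do, and it is what this paper achieves by a different route in Section~2: designating real and imaginary copies on $C^{*}_i$ and orienting the good paths so that every real vertex has outdegree $0$ or $2$ (Claims \ref{c1}--\ref{c3}), a device which, as Theorem~\ref{main} shows, removes the matching hypothesis altogether.
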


 They \cite{DTJ} also pointed out that
 ``It seems hard to discard any of the two conditions in Theorem~\ref{odd} (b), that is connectedness and having a matching that covers all but at most one vertex. In fact, we do not even know if every disjoint union of cycles admits an antimagic orientation.''

Recently, Li et al. \cite{lt} confirmed the case when $G$ is a disjoint union of cycles and also proved a stronger result than Theorem \ref{odd} (b). 
\begin{thm}\emph{\cite{lt}}\label{li} For any integer $d\ge2$,
\begin{description}
  \item[(a)] every $2$-regular graph admits an antimagic orientation;
  \item[(b)] every $2d$-regular graph with at most two odd components admits an antimagic orientation.
\end{description}
\end{thm}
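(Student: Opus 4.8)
The plan is to handle both parts by the same mechanism: orient every cycle as a directed cycle, so that each vertex has in-degree equal to its out-degree, and then choose the edge labels so that the vertex-sums are pairwise distinct. Under such an orientation the vertex-sum of a vertex $v$ is $\sum(\text{labels entering } v) - \sum(\text{labels leaving } v)$, and for a single directed cycle this is just the difference between the label entering $v$ and the label leaving $v$. For part (a), a $2$-regular graph is a disjoint union of cycles, so after fixing directed orientations the whole task becomes: assign the labels $[m]$ (with $m=e(G)$) to the arcs so that all these consecutive differences, taken over every cycle, are distinct. For part (b), I would first invoke Petersen's $2$-factor theorem to decompose the $2d$-regular graph into $d$ edge-disjoint $2$-factors $F_1,\dots,F_d$, orient each $F_i$ as a union of directed cycles, and thereby obtain a balanced orientation in which every vertex has in-degree $d$ and out-degree $d$; the vertex-sum at $v$ is then the sum over the $d$ layers of the per-layer difference, so (b) becomes a layered version of the labeling problem in (a).

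For the labeling in part (a) I would use an explicit interleaving (zig-zag) scheme on each cycle: around a cycle of length $\ell$ I place the labels of its assigned block in the order smallest, largest, second-smallest, second-largest, and so on. A direct check shows this makes the consecutive differences pairwise distinct \emph{within} that cycle, with the negative differences coming out as consecutive odd numbers and the positive differences as consecutive even numbers, together with a single extra value at the ``wrap-around'' vertex. That extra value collides with an even positive difference precisely when $\ell\equiv 0\pmod 4$, so I expect to need a small case analysis on $\ell$ (and a local swap of two labels) to repair the bad lengths. To combine the cycles I would induct on their number, giving each new cycle a fresh block of labels and applying a parity shift or a range shift so that the set of vertex-sums it contributes is disjoint from all sums created previously.

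The extension to part (b) then amounts to superposing the $d$ layers: I would assign layer $F_i$ a band of labels and tune the bands so that the map sending each vertex to its vector of per-layer differences, and then to their sum, is injective. The hypothesis of \emph{at most two odd components} enters exactly here. A component of $G$ with an odd number of vertices forces at least one odd cycle in every $2$-factor, and an odd directed cycle contributes a sum-set of odd size whose parities cannot be balanced as freely as in the even case; this creates a parity defect in the global multiset of vertex-sums. The construction can absorb such a defect coming from at most two odd components---by treating those components separately and pairing their exceptional vertices---but not more, which is why the statement caps the number of odd components at two.

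The step I expect to be the genuine obstacle is the cross-component (and cross-layer) separation, not the within-cycle distinctness, which the zig-zag already delivers. Forcing the vertex-sum sets of different components to be globally disjoint is delicate because all of them cluster near $0$ (the differences around any directed cycle telescope to $0$), so they cannot be pulled apart merely by the magnitude of the label block; I would therefore rely on careful parity bookkeeping together with the order of assignment in the induction, and it is the limit of this bookkeeping in the presence of odd cycles that the two-odd-component hypothesis reflects. Removing this hypothesis---the aim of the present paper---appears to require replacing the layer-by-layer cycle labeling with a single global Euler tour of each component, which is presumably why the Euler tour technique is emphasized.
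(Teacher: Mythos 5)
Your core device for part (a) --- orient every cycle as a directed cycle and give each cycle a fresh \emph{contiguous} block of labels --- provably fails, and the failure is structural, not a matter of bookkeeping. Under a directed-cycle orientation every vertex-sum is a difference of two labels from that cycle's block, and differences are translation-invariant: shifting a block by a constant does not change the sum set at all. Concretely, if a cycle of length $\ell$ receives $\ell$ consecutive integers, all of its $\ell$ vertex-sums lie in $\{\pm1,\pm2,\dots,\pm(\ell-1)\}$, a set of size $2\ell-2$; hence two disjoint cycles of the same length $\ell$ contribute $2\ell$ sums to a universe of size $2\ell-2$, and the pigeonhole principle forces a collision no matter how you zig-zag within each block. (Your own example is instantly fatal: two components isomorphic to $C_5$ with blocks $\{1,\dots,5\}$ and $\{6,\dots,10\}$ and the same interleaving pattern produce \emph{identical} sum sets. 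Worse, a triangle with any three consecutive labels never has three distinct sums under a directed-cycle orientation --- its three differences sum to $0$ and lie in $\{\pm1,\pm2\}$, which is impossible without repetition.) You correctly sensed that the sums ``cluster near $0$'' because differences telescope, but the proposed repair --- parity bookkeeping plus order of assignment --- cannot overcome a pigeonhole obstruction; any fix must abandon either the directed-cycle orientation or the contiguous blocks, and the actual proof abandons both.

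The proof in \cite{lt} (which the present paper follows and refines for Theorem \ref{main}) orients each cycle so that the designated vertices have outdegree $2$ or $0$, making each vertex-sum $\pm(a+b)$, a \emph{magnitude} of two labels rather than a difference; these are not translation-invariant, so distinct label ranges genuinely separate distinct vertices, and the labels are interleaved across the odd components by an algorithm (as in \textbf{Algorithm 1} here) rather than handed out in per-component blocks. For part (b) the engine is not a Petersen $2$-factorization but an Euler tour of each component: the tour turns the whole component into a single cycle $C^*_i$ in which each vertex has $d$ copies, one ``real'' and $d-1$ ``imaginary''; along a directed subpath each imaginary copy receives consecutive labels in and out and so contributes exactly $-1$, reducing the entire problem to distinguishing the real vertices. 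The hypothesis of at most two odd components then enters through the parity of $n_i$, the number of real vertices on $C^*_i$: when $n_i$ is odd the outdegree-$2/0$ alternation around the cycle necessarily breaks at one vertex, and \cite{lt} can absorb at most two such defects --- not, as you conjectured, through odd cycles inside $2$-factors. Your closing instinct that Euler tours are the key to removing the hypothesis is right in spirit, but misplaced: in \cite{lt} the Euler tour is already the mechanism behind part (b) itself, and what this paper adds is the finer placement of real vertices (Claim \ref{c1}) that tames arbitrarily many odd components.
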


In the same paper \cite{lt}, the authors proposed a stronger conjecture as follows.
\begin{conj}\emph{\cite{lt}}\label{conj2}
Every graph admits an antimagic orientation.
\end{conj}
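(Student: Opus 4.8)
The plan is to attack Conjecture~\ref{conj2} by stratifying all graphs according to their degree structure and treating each stratum with a different tool, using the regular results (Theorems~\ref{odd} and~\ref{li}) together with the Euler-tour method underlying the present paper as base cases. The decisive advantage over the undirected antimagic problem is that an orientation is \emph{free}: we may choose both the direction of every edge and the bijective labeling, so a vertex-sum can be pushed positive or negative at will. I would first separate a dense regime from a sparse regime by minimum degree, handle the dense regime probabilistically, and isolate the structural difficulty in the sparse regime, which is exactly where the conjecture genuinely remains open.

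For the dense regime, where the minimum degree is at least $c\log n$, I would mimic the random argument of Hefetz, M\"{u}tze, and Schwartz~\cite{DTJ} (itself modeled on Alon et al.~\cite{Alon}): orient each edge independently and impose a uniformly random bijective labeling, then show that for each fixed pair $u,v$ the probability that their vertex-sums coincide is of order $1/\sqrt{\min(d(u),d(v))}$ by an anticoncentration estimate for the signed label sums, and finish with a union bound over the $\binom{n}{2}$ pairs once all relevant degrees are logarithmically large. This disposes of every graph whose low-degree part is small and isolates the real problem.

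For the remaining sparse graphs I would generalize the Euler-tour technique that drives the regular case. Writing $O$ for the set of odd-degree vertices with $|O|=2t$, every connected component decomposes into $\max(1,t)$ edge-disjoint trails; orienting the edges along each trail and distributing the labels in $[e(G)]$ across the trails gives each vertex $v$ a vertex-sum that is a signed combination of roughly $d(v)/2$ label pairs, precisely as in the regular argument. The new leverage is that vertices of different degrees live in genuinely different sum ranges, so degree variation should \emph{help} the separation rather than hinder it; I would try to place the largest labels at the trail endpoints lying in $O$ and route the bulk of the labels through the high-degree vertices, reducing the task to a bookkeeping statement about making the finitely many sums within each degree class mutually distinct.

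The hard part, and the reason Conjecture~\ref{conj2} is still open, is exactly the classes with many equal low-degree vertices: a forest with an enormous number of leaves, or a long induced path, forces many vertices whose sum is merely $\pm\ell$ or $\pm(\ell_1-\ell_2)$ for single labels $\ell,\ell_1,\ell_2$, and these must be rendered simultaneously distinct across the whole graph while respecting the global bijection onto $[e(G)]$. Coordinating all of these degree-$1$ and degree-$2$ constraints, together with the disconnectedness that the conjecture also permits and that already made the regular even case delicate, is the obstruction that the Euler-tour bookkeeping does not resolve by itself. I expect the proof to stand or fall here: settling arbitrary sparse graphs appears to require a genuinely new exchange or flow argument that goes beyond what the regular case supplies.
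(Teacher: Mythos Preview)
The statement you are trying to prove is Conjecture~\ref{conj2}, which the paper does \emph{not} prove: it is quoted from~\cite{lt} as an open problem, and the paper's contribution is only the regular case (Theorem~\ref{main} together with Theorems~\ref{odd}(a) and~\ref{li}(a), giving Corollary~\ref{reg}). So there is no ``paper's own proof'' to compare against; what the paper actually proves is settled by the Euler-tour labeling scheme on $2d$-regular graphs, and nothing more.

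Your proposal is not a proof but a research outline, and you yourself identify the gap correctly. The dense regime via the anticoncentration/union-bound argument of~\cite{DTJ} is fine and already known. The trail-decomposition idea for sparse graphs is a reasonable heuristic, but the step ``reducing the task to a bookkeeping statement about making the finitely many sums within each degree class mutually distinct'' is exactly where the argument collapses: for graphs with unbounded numbers of degree-$1$ and degree-$2$ vertices (forests with many leaves, long paths, disconnected unions of such), each such vertex sees only one or two labels, and forcing all of the resulting signed sums $\pm\ell$ or $\pm(\ell_1-\ell_2)$ to be pairwise distinct while simultaneously respecting a global bijection onto $[e(G)]$ is not a bookkeeping matter that the Euler-tour framework resolves. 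You acknowledge this in your final paragraph, so the proposal is, by your own admission, incomplete at precisely the point that makes Conjecture~\ref{conj2} open. No argument in the paper or in your sketch closes this gap.
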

In this paper, we prove the following main result.
\begin{thm}\label{main}
  Every $2d$-regular graph admits an antimagic orientation, where $d\geq2$.
\end{thm}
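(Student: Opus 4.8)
The plan is to build the orientation and labeling component by component. Since $G$ is $2d$-regular, every component is Eulerian and, by a $2$-factorization, $G$ is the edge-disjoint union of $d$ spanning $2$-regular subgraphs $F_1,\dots,F_d$, each a disjoint union of cycles. Theorem~\ref{li} already settles the cases with at most two components of odd order (and the purely $2$-regular case), so the real target is a construction that tolerates arbitrarily many odd components. I would first record the relevant obstruction explicitly: a connected $2d$-regular graph need not have a matching covering all but one vertex once $d\ge 2$, since deleting a single vertex (of degree $2d$) can already create up to $d$ odd components. Hence the near-perfect matching underlying Theorem~\ref{odd}(b) is genuinely unavailable and must be replaced by a structure that exists in every even-regular graph.

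For each component I would orient $F_1,\dots,F_{d-1}$ as directed cycles and give their edges the smaller labels, producing a balanced ``bulk'' contribution at every vertex; the last factor $F_d$, which still meets \emph{every} vertex, would receive the top block of labels and play the role of the missing matching. Traversing each cycle of $F_d$ as an Euler tour of that cycle, I would orient it consistently and assign the top labels so that the resulting per-vertex quantity $\ell^{\mathrm{in}}-\ell^{\mathrm{out}}$ runs over distinct values, exactly as in the cyclic labelings behind Theorem~\ref{li}(a). The point of using a spanning $2$-factor rather than a matching is precisely that it exists in every even-regular graph, odd components included, so that the scheme is uniform and does not degrade when near-perfect matchings are absent.

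The main obstacle is the \emph{global} distinctness of the vertex-sums, and here the usual ``disjoint label-blocks give disjoint sum-ranges'' heuristic fails: within any single component the vertex-sums necessarily add up to $0$ (each edge contributes its label once with each sign), so they straddle $0$ and cannot be pushed into a separate magnitude band. Moreover, uniformly shifting one component's sums by a constant is impossible, since that would force a constant in/out imbalance while $\sum_u\bigl(d^+(u)-d^-(u)\bigr)=0$. I would therefore order the components, assign them consecutive label-blocks, and fix the labelings greedily, using at each stage the freedom in the internal labeling and in the orientation of the control factor $F_d$ to dodge the finitely many vertex-sums already committed by earlier components. The heart of the argument — and the step I expect to be the most delicate — is to show that the top block carried by $F_d$ supplies enough distinct, sufficiently spread values to simultaneously (i) correct the balanced bulk contributions, (ii) remain pairwise distinct inside each component, and (iii) avoid all previously used sums, with the odd cycles of $F_d$ (the true source of the odd-component difficulty) treated as the key special case.
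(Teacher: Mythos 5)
Your plan stops exactly where the proof has to start: the step you yourself flag as "the most delicate" --- showing that the top label block carried by $F_d$ can simultaneously correct the bulk, stay distinct within each component, and dodge all previously committed sums --- is the entire content of the theorem, and a generic greedy appeal to "freedom" does not close it. As you correctly observe, the vertex-sums of every component straddle $0$, so the collisions to be avoided are concentrated in a bounded window around $0$; but you offer no counting or exchange argument showing that the available freedom (orientations of the cycles of $F_d$ plus permutations inside a fixed block) exceeds the number of forbidden values. Worse, in the odd components the obstruction is concrete and parity-based: an odd cycle of $F_d$ cannot be oriented so that every vertex has outdegree $0$ or $2$, so each odd cycle forces at least one vertex with in/out pattern $(1,1)$, whose sum is a small difference of two labels from the same block --- exactly the values most likely to collide across the $s\ge 3$ odd components. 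This is the very phenomenon that limited Theorem~\ref{li}(b) to two odd components, and your proposal does not say how it is resolved. A secondary gap: your "balanced bulk" premise for $F_1,\dots,F_{d-1}$ is not automatic. Orienting each cycle as a directed cycle with consecutive increasing labels makes every vertex contribute $-1$ \emph{except} one wrap-around vertex per cycle, which receives a large positive jump; with $d-1$ factors, each possibly splitting into many cycles, these exceptional vertices are numerous and their contributions land in the same range as your $F_d$ corrections.

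For comparison, the paper's proof avoids both problems by choosing a different decomposition: instead of a $2$-factorization it takes a single Euler tour of each component, regarded as one long cycle $C^*_i$ in which each vertex has one \emph{real} copy and $d-1$ \emph{imaginary} copies. Labelling in increasing consecutive runs makes every imaginary copy contribute exactly $-1$, there is only one wrap per component (placed at the controlled edge $v^i_{n_i}v^i_1$), and the odd-component parity problem is absorbed explicitly: the special vertices $v^i_1$ of the $s$ odd components receive the pairwise distinct sums $-s-i+1$ for $i\in[s]$, while the interleaved labelling of Algorithm~1, together with the monotonicity of Claim~\ref{c3}, forces all remaining real vertices to have pairwise distinct sums of absolute value at least $3s+1$, so the two families cannot meet. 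If you want to salvage your route, you would need an analogous \emph{explicit} mechanism --- not a greedy argument --- for the $(1,1)$-vertices of the odd cycles of $F_d$ and for the wrap vertices of the bulk factors; as written, the proposal identifies the right difficulties but does not prove the theorem.
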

Together with Theorem \ref{odd} (a) and Theorem \ref{li} (a), Theorem \ref{main} implies that Conjecture \ref{conj2} holds for all regular graphs.
\begin{corollary}\label{reg}
  Every regular graph admits an antimagic orientation.
\end{corollary}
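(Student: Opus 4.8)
The plan is to prove Theorem~\ref{main} directly; Corollary~\ref{reg} then follows at once, since every odd-regular graph is covered by Theorem~\ref{odd}(a), every $2$-regular graph by Theorem~\ref{li}(a), and every $2d$-regular graph with $d\ge 2$ by Theorem~\ref{main}. So fix a $2d$-regular graph $G$ with $d\ge 2$, possibly disconnected, and write $m=e(G)$, $n=|V(G)|$, so that $m=dn$.

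Since every vertex of $G$ has even degree, each component of $G$ is Eulerian. The first step is to fix, in every component, an Euler tour $v_0e_1v_1e_2\cdots e_{m'}v_{m'}=v_0$ and to orient each edge $e_i$ from $v_{i-1}$ to $v_i$. This produces a balanced orientation, in which every vertex has in-degree and out-degree exactly $d$, and each visit of the tour to a vertex $v$ pairs an in-arc $e_i$ (with $v_i=v$) with the out-arc $e_{i+1}$ that leaves along the tour. Consequently, for any labeling $\ell$ the vertex-sum of $v$ telescopes visit by visit, and reduces to a sum of cyclic label-differences:
\[
  s(v)=\sum_{i:\,v_i=v}\bigl(\ell(e_i)-\ell(e_{i+1})\bigr),
\]
with indices read modulo the length of the tour. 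Thus the task becomes purely combinatorial: choose the bijection $\ell\colon E(G)\to[m]$ (and, with some freedom, the tours themselves) so that the $n$ block-sums $s(v)$ are pairwise distinct. Note that the sums of each component automatically add up to $0$.

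For a single connected component I would choose the labels so that, read along the tour, consecutive labels increase by $1$ at most positions---so that most differences $\ell(e_i)-\ell(e_{i+1})$ equal $-1$---while inserting a short list of larger ``jumps'', one associated with each vertex, at carefully chosen positions. Since $d\ge 2$, every vertex is visited at least twice, so its sum $s(v)$ is a sum of at least two difference-terms; this extra slack (absent in the $2$-regular case of Theorem~\ref{li}(a), which is precisely why $d\ge 2$ is assumed here) is what lets me steer the jumps so that the resulting $s(v)$ are forced to be distinct. A convenient bookkeeping is to process the vertices in the order of their first appearance along the tour and to let the $k$-th jump be responsible for separating the $k$-th vertex from the ones already placed.

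The main obstacle is coordination \emph{across} components. Because the Eulerian orientation is balanced, adding a constant $c$ to every label of a component changes each of its vertex-sums by $c(d-d)=0$; equivalently, the sums of each component always average $0$ and cannot be shifted as a block. Hence one cannot separate components simply by handing them disjoint contiguous ranges of labels---every component's sums stubbornly cluster around $0$, and collisions \emph{between} components are the real enemy. To defeat this I would not label the components one contiguous block at a time, but instead interleave the global label set $[m]$ among the components, using the freedom in the jump \emph{sizes} to widen or narrow each component's spread and to slot its sums into values not yet used by any other component; the number of forbidden values is at most $n$, while the range of attainable sums grows linearly in $m$, leaving room to avoid every collision. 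I expect this cross-component avoidance argument---together with a parity bookkeeping distinguishing components by the parity of their order, mirroring the ``at most two odd components'' hypothesis of Theorem~\ref{li}(b) that the present theorem removes---to be the technical heart of the proof, and the step most likely to require a delicate case analysis. Once all $n$ sums are pairwise distinct, the chosen orientation is antimagic, proving Theorem~\ref{main} and hence Corollary~\ref{reg}.
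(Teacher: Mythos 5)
Your reduction of Corollary~\ref{reg} to Theorems~\ref{odd}(a), \ref{li}(a) and \ref{main} is exactly the paper's derivation, so the corollary itself would follow; the problem is that your proof of Theorem~\ref{main}, on which everything rests, is not a proof. You yourself flag the decisive step --- making the vertex-sums of different components avoid one another --- and then leave it as something you ``expect'' to work, with no mechanism given for how the jumps are chosen, why within a single component the sums $s(v)$ come out pairwise distinct, or why the cross-component avoidance can always be completed. The counting remark (at most $n$ forbidden values versus a range of sums linear in $m$) does not by itself produce a labeling realizing the desired sums, because the sums are heavily coupled: each difference $\ell(e_i)-\ell(e_{i+1})$ can be steered, but the labels must still form a single bijection onto $[m]$, so the jumps cannot be tuned independently across vertices, let alone across components.

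Moreover, your choice of the straight Euler orientation (every edge oriented along the tour, so every vertex is balanced with in- and out-degree $d$) is what \emph{creates} the shift-invariance obstacle you then struggle with, and the paper's proof shows it is a wrong turn. The paper does not use the balanced orientation: it regards each tour as a cycle $C^*_i$ with one real and $d-1$ imaginary copies of every vertex, and re-orients the segments between consecutive real vertices so that each real vertex has outdegree $2$ or $0$, i.e.\ is a source or a sink on the cycle. A sink then has sum $+(x+y)$ and a source $-(x+y)$, where $x,y$ are its two labels, so giving different components different label ranges \emph{does} shift their sums apart --- precisely the lever your balanced orientation destroys (your own computation $c(d-d)=0$). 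With this orientation the paper separates components by magnitude (Observations 1--3 in Claim~\ref{c2}), handles the $s\ge 3$ odd components by assigning the labels $1,\dots,s$ to the edges $v^i_{n_i}v^i_1$ and using Claim~\ref{c1} to arrange the initial path, and proves distinctness within and between odd components via the monotonicity of Claim~\ref{c3}, which the greedy labeling of Algorithm~1 enforces. To repair your argument you would either have to abandon the balanced orientation for one of this source/sink type, or actually construct your interleaved labeling and prove the collision-avoidance claim --- which is exactly the content you have omitted.
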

Recently,  Shan and Yu ~\cite{SY} affirmed Conjecture~\ref{conj2} for biregular bipartite graphs, where we only require that any two vertices in the same part have the same degree.

It is worth noting that the proof of Thereom \ref{main} is similar to the proof of Theorem \ref{li} given in \cite{lt}. We label the edges of odd components judiciously so that we obtain a desired antimagic orientation.

We need to introduce one more notation. A closed walk in a graph $G$ is an \emph{Euler tour} if it traverses every edge of the graph exactly once.  
The following is a result of Euler.
\begin{thm}[Euler 1736]\label{euler}
A connected graph admits an Euler tour if and only if every vertex has even degree.
\end{thm}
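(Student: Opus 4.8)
The plan is to prove the two implications of Theorem~\ref{euler} separately: necessity is a short parity observation, while sufficiency is the substantive direction, which I would establish by an extremal (maximal-trail) argument. For necessity, suppose $G$ admits an Euler tour $W = v_0 e_1 v_1 e_2 \cdots e_m v_m$ with $v_0 = v_m$, and fix a vertex $u$. Since $W$ traverses every edge exactly once, each edge incident to $u$ occurs exactly once along $W$, and I would pair these edges according to the visits of $W$ to $u$: every interior index $i$ with $0 < i < m$ and $v_i = u$ contributes the consecutive pair $\{e_i, e_{i+1}\}$, and the coincidence $v_0 = v_m = u$ (when it occurs) pairs $\{e_1, e_m\}$. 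This partitions the edges at $u$ into pairs, so $d_G(u)$ is even.

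For sufficiency, assume $G$ is connected with every degree even; I may assume $G$ has at least one edge, the edgeless case being the trivial empty tour at the lone vertex. Among all \emph{trails} (walks with no repeated edge) in $G$, choose one of maximum length, say $T = u_0 e_1 u_1 \cdots e_k u_k$. The first claim is that $T$ is closed, that is $u_0 = u_k$. Indeed, if $u_0 \neq u_k$, then counting the edges of $T$ incident to the endpoint $u_k$ yields an odd number, since each interior visit to $u_k$ contributes two edges while the terminal edge $e_k$ contributes one. As $d_G(u_k)$ is even, some edge at $u_k$ must lie outside $T$, and appending it produces a longer trail, contradicting maximality. Hence $T$ is a closed trail.

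The second claim is that $T$ uses every edge of $G$. Suppose not. I would first observe that some vertex of $T$ is incident to an edge outside $T$: otherwise every unused edge would have both endpoints outside $V(T)$, and then connectivity would supply a path from $V(T)$ to its complement whose first edge is both unused and incident to $V(T)$, a contradiction. So fix an unused edge $f$ incident to a vertex $w \in V(T)$. Because $T$ is closed, I may rotate it to begin and end at $w$ and then append $f$, obtaining a strictly longer trail and again contradicting maximality. Therefore $T$ traverses every edge exactly once and is the required Euler tour.

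The step I expect to be the main obstacle is the bookkeeping in the sufficiency direction: the extremal choice must be phrased so that both the parity argument at a loose endpoint and the rotation-and-append step at an interior vertex go through cleanly, and connectivity must be invoked at precisely the right moment to rule out unused edges hiding in a separate part of the graph. By contrast, the necessity direction is merely a matter of pairing consecutive edges at each vertex and presents no real difficulty.
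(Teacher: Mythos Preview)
The paper does not supply a proof of Theorem~\ref{euler}; it is quoted as a classical result (attributed to Euler, 1736) and used only as a tool to guarantee Euler tours in the $2d$-regular components. Your argument is correct and is the standard textbook proof: necessity via pairing consecutive tour-edges at each vertex, and sufficiency via a maximal-trail argument in which parity forces the trail to close and connectivity together with rotation rules out any leftover edge. There is nothing in the paper to compare against beyond noting that what you wrote is the expected proof.
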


\section{Proof of Theorem \ref{main}}
 Let $G$ be a $2d$-regular graph on $n$ vertices with $d\geq2$, then by Theorem \ref{li} (b), we assume that $G$ contains more than two odd components. Let $G_1,G_2,\ldots,G_s,G_{s+1},\ldots,G_t$ be components of $G$ with $|V(G_i)|=n_i$ and $e(G_i)=m_i$ for each $i\in[t]$, where $G_1,G_2,\ldots,G_s$ are all the odd components of $G$ with $n_1\leq n_2\leq\ldots\leq n_s$ and $s\geq3$. By Theorem \ref{euler}, each $G_i$ has an Euler tour $T_{i}$ for $i\in[t]$. We can regard each $T_i$ as a simple cycle of size $e(G_i)$, say $C^*_i$, and each vertex $u\in V(G)$ has $d$ copies in $C^*_i$. For each vertex $u\in V(G_i)$, we can arbitrarily pick one of the $d$ copies of $u$ as a real vertex and the remaining $d-1$ copies as imaginary vertices. Therefore, $C^*_i$ has $n_i$ real vertices and $(d-1)n_i$ imaginary vertices. Moreover, we can pick the real vertices as follows.
\begin{claim}\label{c1}
For any $i\in [t]$, we can pick $n_i$ real vertices in $C^*_i$ such that $C^*_i$ contain four consecutive vertices, say, $a,b,c,d$ such that $a,b,d$ are real vertices and $c$ is a imaginary vertex.
\end{claim}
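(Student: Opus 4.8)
The plan is to translate the statement into a purely combinatorial condition on the cyclic sequence of vertex-copies along $C^*_i$ and then rule out a single bad configuration by a short periodicity argument. Label the vertices of $C^*_i$ as $v_0,v_1,\dots,v_{m_i-1}$ in cyclic order (indices taken mod $m_i$), and for each position $p$ let $o(p)\in V(G_i)$ denote the original vertex of which $v_p$ is a copy. First I would record two elementary facts, both coming from the facts that $G_i$ is simple and that the Euler tour traverses each edge of $G_i$ exactly once. Any two cyclically consecutive positions are copies of distinct vertices, since a loop $o(p)=o(p+1)$ cannot occur in a simple graph. Moreover, any two positions at cyclic distance $2$ are copies of distinct vertices as well: if $o(p)=o(p+2)$, then the two consecutive tour-edges $v_pv_{p+1}$ and $v_{p+1}v_{p+2}$ would be the same edge $\{o(p),o(p+1)\}$ of $G_i$, contradicting that the tour uses each edge only once.

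Next I would reduce the claim to the clean statement that it suffices to exhibit four consecutive positions $p,p+1,p+2,p+3$ for which $o(p),o(p+1),o(p+2),o(p+3)$ are four pairwise distinct vertices. Indeed, given such a window, put $a=v_p$, $b=v_{p+1}$, $c=v_{p+2}$, $d=v_{p+3}$. Declaring $a,b,d$ to be the (unique) real copies of their respective vertices, and choosing for the vertex $o(p+2)$ any of its other $d-1\ge1$ copies as its real copy (so that $c$ becomes imaginary), is a consistent assignment precisely because $o(p),o(p+1),o(p+2),o(p+3)$ are distinct. The remaining real vertices are then picked arbitrarily, one copy per vertex, yielding a valid selection of the $n_i$ real vertices with the desired window.

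It remains to produce such a window, and this is where the only genuine work lies. By the two facts above, the pairs of positions at cyclic distance $1$ and $2$ inside any window are automatically distinct, so the only way a window $p,\dots,p+3$ can fail is $o(p)=o(p+3)$. Suppose for contradiction that every window fails, i.e.\ $o(p)=o(p+3)$ for all $p$. Then $o$ is invariant under the shift by $3$ on $\mathbb{Z}/m_i$, hence constant on the cosets of the subgroup generated by $3$. If $3\nmid m_i$ this subgroup is all of $\mathbb{Z}/m_i$, forcing $o$ to be constant and contradicting that consecutive positions are distinct. If $3\mid m_i$ there are exactly three such cosets, so $G_i$ has at most three distinct vertices; but a $2d$-regular simple graph with $d\ge2$ has at least $2d+1\ge5$ vertices, a contradiction. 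Either way a good window exists, proving the claim.

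The main obstacle, as indicated, is controlling the distance-$3$ coincidence $o(p)=o(p+3)$: coincidences at distances $1$ and $2$ are ruled out directly by simplicity of $G_i$, whereas a distance-$3$ coincidence can genuinely occur locally (for instance around a triangle of the tour). The point to be exploited is that it cannot occur simultaneously at every position without collapsing $G_i$ to at most three vertices, which the regularity hypothesis $d\ge2$ forbids.
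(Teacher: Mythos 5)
Your proof is correct, but it finishes by a genuinely different argument than the paper. The paper's proof is purely local: it takes five consecutive vertices $x_1,x_2,x_3,x_4,x_5$ of $C^*_i$ and splits into two cases --- if $x_1\neq x_4$, the window $x_1,x_2,x_3,x_4$ works (with $x_3$ made imaginary); if $x_1=x_4$, i.e.\ the tour traverses a triangle, the shifted window $x_2,x_3,x_4,x_5$ works (with $x_4$ made imaginary). Implicit in that second case is precisely the fact your setup makes explicit: two consecutive distance-$3$ coincidences $o(p)=o(p+3)$ and $o(p+1)=o(p+4)$ cannot both occur, since the edge $\{o(p),o(p+1)\}$ of the simple graph $G_i$ would then be traversed twice by the Euler tour (the paper leaves the needed checks, such as $x_2\neq x_5$, unstated). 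You instead argue globally: if every window fails, then $o$ is invariant under the shift by $3$ on $\mathbb{Z}/m_i$, hence constant on the cosets of $\langle 3\rangle$, so $o$ takes at most $\gcd(3,m_i)\le 3$ values, contradicting $n_i\ge 2d+1\ge 5$. The paper's local version actually yields slightly more --- a good window inside \emph{any} five consecutive positions, which is what lets it anchor the window at a convenient location before relabelling --- whereas your global contrapositive only produces some good window somewhere; since the claim is invariant under cyclic relabelling of the real vertices, that is all that is needed. In compensation, your distance-$1$ and distance-$2$ facts, and the observation that the only possible failure in a $4$-window is $o(p)=o(p+3)$, supply rigorously the verifications the paper glosses over, and your reduction step (declaring $a,b,d$ real and picking one of the $d-1\ge 1$ remaining copies of $o(p+2)$ as its real vertex) is carried out cleanly. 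Both proofs use the same two ingredients, simplicity of $G_i$ and the single-traversal property of the Euler tour; the difference is only local case analysis versus a periodicity/coset argument.
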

\begin{proof} Since $G_i$ is $2d$-regular with $d\geq2$, we have $n_i\geq5$ and let $x_1,x_2,x_3,x_4,x_5$ be five consecutive vertices of $C^*_i$. If $x_1$ and $x_4$ are distinct vertices, then treat $x_3$ as an imaginary vertex (this is possible when $d\geq2$) and $a=x_1, b=x_2, c=x_3,d=x_4$ will suffice. If $x_1=x_4$, i.e., $x_1x_2x_3$ forms a triangle in $G_i$, then treat $x_4$ as an imaginary vertex and $a=x_2,b=x_3,c=x_4,d=x_5$ will suffice.
\end{proof}
For simplicity, let $V^i_R$ be the set of $n_i$ real vertices in $C^*_i$, and label them with $v^i_{1},v^i_{2},\ldots,v^i_{n_i}$ in order. By renaming them if necessary, together with Claim \ref{c1}, we may further assume that $a=v^i_{n_i}, b=v^i_{1},d=v^i_{2}$  for each $i\in[t]$. For each $i\in [t]$ and each $1\leq l\leq n_i-1$, let $P^i_l$ be the path between $v^i_{l}$ and $v^i_{l+1}$ in $C^*_i$ such that either $v^i_{l}v^i_{l+1}\in E(C^*_i)$ or all the internal vertices of $P^i_l$ are imaginary vertices, and we call such paths $P^i_l$ \emph{good}.

We observe that each orientation (labelling) of $C^*_i$ corresponds to an unique orientation (labelling) of $G_i$, so in the following proof we only consider orientations (labellings) on $C^*_i$.
Next, we will finish the proof by orienting and labelling the edge set of all $C^*_i$ such that the corresponding orientation of $G$ has an antimagic labelling.

We orient each $C^*_i$ as follows.
For each $C^*_i$ with $i\in[s]$, firstly we orient the edge $v^i_{n_i}v^i_{1}$ from $v^i_{n_i}$ to $v^i_{1}$, and then orient the path $P^i_1$ to be a directed path from $v^i_{1}$ to $v^i_{2}$. Next, for all $l\in\{2,3,\ldots,n_i-1\}$, we orient the path $P^i_l$ to be a directed path between $v_l^i$ and $v^i_{l+1}$ such that each real vertex in $\{v^i_{2},\ldots,v^i_{n_i}\}$ has outdegree either $2$ or $0$ (see Figure \ref{f1}).
For each $C^*_j$ with $j>s$, firstly we orient the edge $v^j_{n_j}v^j_{1}$ from $v^j_{n_j}$ to $v^j_{1}$. Next, for all $l\in\{1,2,3,\ldots,n_j-1\}$, we orient the path $P^j_l$ to be a directed path between $v_l^j$ and $v^j_{l+1}$ such that any real vertex in $\{v^j_{1},v^j_{2},\ldots,v^j_{n_j}\}$ has outdegree either $2$ or $0$ (this is possible since $n_j$ is even).

Now we have the desired orientation of all $C^*_i$, say $D$. Let $N_i=\sum\limits_{j=1}^im_j$ for each $i\in [t]$, we label $C^*_{1},C^*_{2},\ldots,C^*_{s}$ with $[N_s]$ and then $C^*_{s+1},\ldots,C^*_{t}$ with $\{N_s+1,N_s+2,\ldots,N_t\}$ as follows.

\begin{algorithm}[H]\label{label1}
\KwData{$C^*_1,C^*_2,\ldots,C^*_t$ with a given orientation $D$}
\KwResult{A bijection  $c: A(D)\rightarrow [dn]$}

\For{$i=1$ to $s$ }{
 Label $v^i_{n_i}v^i_{1}$ with $i$ and
 assign the two numbers $s+2i-1, s+2i$ to the \\two arcs in increasing order along the orientation of $P^i_1$\;
}
Denote by $L$ the set of edges which have been labelled as above \;
 \While{$L\neq \bigcup\limits_{i=1}^sA(C^*_i)$}{
{- Find an edge $e\in L$ according to the following order of preference:
\begin{enumerate}
  \item $e$ is adjacent to an edge which is unlabelled;
  \item the label on $e$ is minimal among all the edges which satisfy 1.
\end{enumerate}
Let $P$ be the good path which is unlabelled and shares an endpoint with $e$\;
}
{- Assign the $|A(P)|$ smallest unused numbers in $[N_s]$ to the arcs of $P$ in \\ increasing order along the orientation of $P$ \;
}
{
- Set $L=L\cup A(P)$ \;
}
}
\For{$i=s+1$ to $t$ }{
Label $v^i_{n_i}v^i_{1}$ with $N_{i-1}+1$ and assign the two numbers $N_{i-1}+2, N_{i-1}+3$ to \\the two arcs in increasing order along the orientation of $P^i_1$\;
Set $B=\{v^i_{n_i}v^i_{1}\}\cup A(P^i_{1})$ \;
  \While{$B\neq A(C^*_i)$}{
{- Find an arc $e\in B$ according to the following order of preference:
\begin{enumerate}
  \item $e$ is adjacent to an edge which is unlabelled;
  \item the label on $e$ is minimal among all the edges which satisfy 1.
\end{enumerate}
Let $P$ be the good path which is unlabelled and shares an endpoint with $e$\;
}
{- Assign the $|A(P)|$ smallest unused numbers in $\{N_s+1,N_s+2,\ldots,N_t\}$ to the arcs of $P$ in increasing order along the orientation of $P$ \;
}
{
- Set $B=B\cup A(P)$ \;
}}
}
\caption{Label the arcs of $D$}
\end{algorithm}
\bigskip
Note that in \textbf{Algorithm 1}, \textbf{steps} $1$-$10$ are devoted to labelling $C^*_1,C^*_2,\ldots,C^*_s$ with integers in $[N_s]$. Since $D$ is a family of oriented cycles, we say a real vertex $u$ \emph{sees} $(x,y)$ in the resulting labelling $c$ if the two arcs incident with $u$ have labels $x$ and $y$ with $x<y$. For simplicity, we say each real vertex $v^i_l$ in $C^*_i$ sees $(x^i_l,y^i_l)$ for any $i\in [s]$ and $l\in [n_i]$.

By running \textbf{steps} $7$-$8$, intuitively, we have the following claim. Using induction on $l$, we can easily prove the claim, here we omit it.
\begin{claim}\label{c3}
 For any two $ C^*_i, C^*_j$ with $i<j\leq s$ and any integer $l$ with $2\leq l\leq\lceil n_i/2\rceil$, we have $x^i_{n_i-l+2}<x^j_{n_i-l+2}<x^i_l<x^j_l$.
\end{claim}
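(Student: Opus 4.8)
The plan is to determine, precisely, the order in which \textbf{Algorithm~1} assigns labels to the good paths of the odd components $C^*_1,\dots,C^*_s$, and then to read the four inequalities straight off the resulting block structure; the induction on $l$ promised in the statement is exactly an induction on this order. First I would fix the shape of the orientation. Because every $n_i$ is odd and $P^i_1$ is forced to run $v^i_1\to v^i_2$, the rule that each real vertex of $\{v^i_2,\dots,v^i_{n_i}\}$ has outdegree $0$ or $2$ forces $P^i_l$ to be directed $v^i_l\to v^i_{l+1}$ for odd $l$ and $v^i_{l+1}\to v^i_l$ for even $l$; the arc $v^i_{n_i}\to v^i_1$ then gives $v^i_{n_i}$ outdegree $2$, so the choice is consistent. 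Hence in each $C^*_i$ the good paths form a \emph{lower chain} $P^i_{n_i-1},P^i_{n_i-2},\dots$ growing from $v^i_{n_i}$ and an \emph{upper chain} $P^i_2,P^i_3,\dots$ growing from $v^i_2$. For $k\ge 1$ let $\beta^{(k)}_i$ and $\tau^{(k)}_i$ denote the sets of labels that \textbf{Algorithm~1} assigns to $P^i_{n_i-k}$ and to $P^i_{k+1}$; a whole good path is labelled in one step, so these are genuine blocks of consecutive integers.

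The core step is the block-ordering assertion, proved by induction on $l$ (equivalently on $k=l-2$): writing $X<Y$ to mean every element of $X$ is below every element of $Y$, the algorithm produces
\[
 \beta^{(k)}_1<\dots<\beta^{(k)}_s<\tau^{(k)}_1<\dots<\tau^{(k)}_s<\beta^{(k+1)}_1<\dots
\]
(restricted to the components still unfinished at level $k$). The mechanism is a trigger analysis. After $P^i_{n_i-k}$ is labelled, its end-arc at $v^i_{n_i-k+1}$ abuts the already-labelled $P^i_{n_i-k+1}$, so the unique newly exposed good path is $P^i_{n_i-k-1}$, reached across an end-arc whose label lies in $\beta^{(k)}_i$; symmetrically the upper chain exposes $P^i_{k+2}$ through an arc labelled in $\tau^{(k)}_i$. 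Since \textbf{Algorithm~1} always continues from the minimal exposed label, the induction hypothesis (level-$k$ lower blocks precede level-$k$ upper blocks, each increasing in $i$) forces it to treat every exposed lower-$(k{+}1)$ path, in increasing $i$, before any upper-$(k{+}1)$ path, and then those in increasing $i$ as well; comparing the exposed level-$k$ upper triggers with the freshly created level-$(k{+}1)$ lower triggers shows the next round again starts on the lower side, closing the induction. The delicate point, which I expect to be the main obstacle, is the bookkeeping when the two chains of a short cycle meet: the lower and upper chains of $C^*_i$ both reach the central path $P^i_{(n_i+1)/2}$ at level $(n_i-1)/2$, and I must check that this path is labelled from the lower side (its lower trigger precedes its upper trigger) and that the resulting early deactivation of the shorter cycles leaves the increasing-in-$i$ order among the still-active cycles intact.

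Finally I would conclude. Fix $i<j\le s$ and $2\le l\le\lceil n_i/2\rceil$, and set $k=l-2$, so $k\le (n_i-3)/2$ and both $C^*_i,C^*_j$ are active at level $k$ with distinct lower and upper level-$k$ paths. The vertex $v^i_{n_i-l+2}$, the depth-$k$ vertex of the lower chain of $C^*_i$, sees the end-arcs of $P^i_{n_i-k}$ and $P^i_{n_i-k-1}$, so $x^i_{n_i-l+2}\in\beta^{(k)}_i$; the depth-$k$ lower vertex of $C^*_j$ likewise yields a smaller label in $\beta^{(k)}_j$, while the depth-$k$ upper vertices $v^i_l,v^j_l$ give $x^i_l\in\tau^{(k)}_i$ and $x^j_l\in\tau^{(k)}_j$. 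The block ordering then forces $\beta^{(k)}_i<\beta^{(k)}_j<\tau^{(k)}_i<\tau^{(k)}_j$, which is exactly the asserted chain of inequalities. For the base case $l=2$ (so $k=0$) the four quantities reduce instead to the initialization values $i,\,j,\,s+2i,\,s+2j$, and $i<j<s+2i<s+2j$ holds because $i<j\le s$.
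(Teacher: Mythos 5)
Your proposal is correct and is essentially the paper's own argument: the paper explicitly omits the proof of Claim \ref{c3} (``using induction on $l$, \dots here we omit it''), and your block-ordering induction on the rounds of Algorithm 1 --- the forced alternating orientation, the trigger analysis showing round-$k$ lower blocks precede round-$k$ upper blocks with each phase increasing in $i$, and the check that the central path $P^i_{(n_i+1)/2}$ is absorbed from the lower side so that finished short cycles drop out without disturbing the order --- is exactly that omitted induction carried out in full, with the correct base case $i<j<s+2i<s+2j$. One remark: you prove the chain for the depth-$(l-2)$ lower vertex $v^j_{n_j-l+2}$ of $C^*_j$, which is the right reading; the subscript $n_i-l+2$ on $x^j$ in the printed statement is evidently a typo for $n_j-l+2$ (the two coincide when $n_i=n_j$, and as written the inequality would fail for $n_j>n_i$), as confirmed by the later appeal to the position $l=n_j-\lceil n_i/2\rceil+2$ in the proof of Claim \ref{c2}.
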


\begin{figure}[h]
  \centering
  \includegraphics[width=465pt]{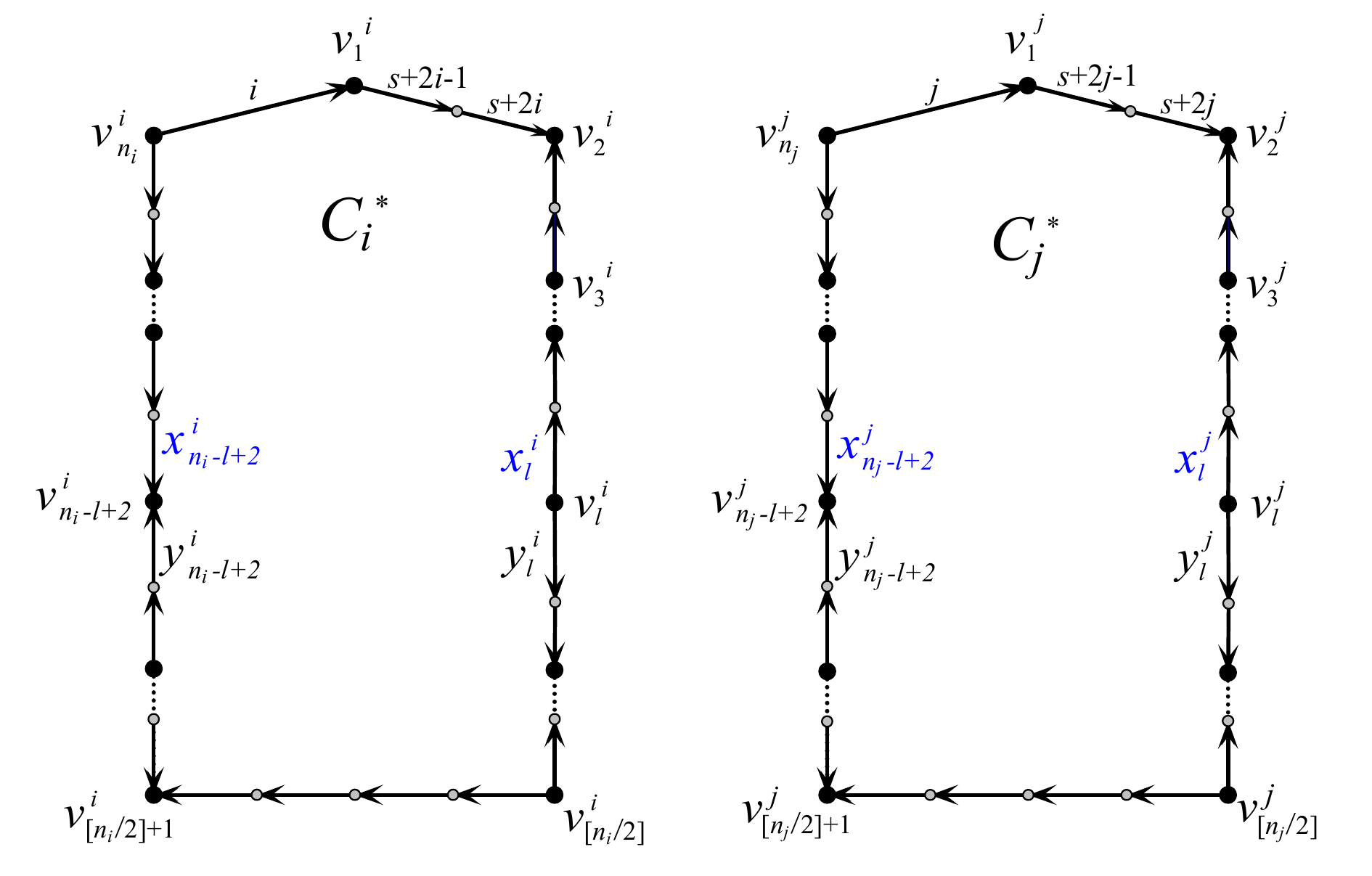}\\
  \caption{$C^*_i$ and $C^*_j$ in Claim \ref{c3}. }\label{f1}
\end{figure}
Since each $G_i$ is $2d$-regular, every vertex $u$ has $d-1$ imaginary copies in $D$ and each imaginary copy contributes $-1$ to the vertex sum of $u$ in the resulting labelling $c$. So in order to make sure that no two vertices in $G$ have the same vertex sum, it suffices to prove that no two real vertices in $D$ have the same vertex sum. For any vertex $u\in V(D)$, denote by $S(u)$ the vertex sum of $u$ in $D$. We complete our proof with the following claim.

\begin{claim}\label{c2}
  No two real vertices in $D$ have the same vertex sum.
\end{claim}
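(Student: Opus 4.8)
The plan is to split the real vertices according to the sign of their vertex sum and prove distinctness within each sign class separately. By construction every real vertex other than the $v^i_1$ with $i\in[s]$ has both of its incident arcs oriented alike: if its out-degree is $0$ the vertex sum equals the sum of its two incident labels, hence is positive, and if its out-degree is $2$ the sum is the negative of that, hence negative. The special vertices $v^i_1$ carry the prescribed orientation, giving $S(v^i_1)=i-(s+2i-1)=1-s-i<0$. Since all labels are positive, a positive sum can never equal a negative one, so it suffices to show that the positive sums are pairwise distinct and that the negative sums are pairwise distinct.

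Next I would peel off the even components, both from the odd ones and from one another. The odd components are labelled entirely within $[N_s]$, whereas each even component $C^*_j$ receives the disjoint block $\{N_{j-1}+1,\dots,N_j\}$ and is labelled completely before $C^*_{j+1}$. Bounding the least and greatest achievable sum over each block then shows that the positive (resp.\ negative) sum-ranges of two distinct even components are disjoint, and that both are separated from the corresponding range for the odd components, whose labels never exceed $N_s$. Within a single even component the zigzag orientation together with the monotone order in which the labelling algorithm assigns its blocks makes the sums strictly monotone along the cycle, exactly as in the proof of Theorem~\ref{li}; this disposes of every collision that involves an even component.

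The crux is therefore distinctness among the $s$ odd components, which all draw their labels from the interleaved set $[N_s]$. I would first handle the special vertices: the values $1-s-i$ for $i\in[s]$ are plainly distinct, and a short computation shows that every other negative sum arising from an odd component is strictly less than $1-2s$, so the $v^i_1$ collide with nothing. For the remaining out-degree-$0$ and out-degree-$2$ vertices I would exploit the wave structure of the labelling: the good paths are processed in successive waves that alternate between the two ends of each cycle and move inward, each wave receiving a strictly larger block of integers than the preceding one and sweeping the components in the order $1,2,\dots,s$. This makes the smaller label $x^i_l$ seen at a real vertex the decisive quantity, and Claim~\ref{c3} furnishes exactly the chain $x^i_{n_i-l+2}<x^j_{n_i-l+2}<x^i_l<x^j_l$ needed to separate these smaller labels across components and across the front and back halves of a single cycle; tracking the companion label $y^i_l$ in the few cases where two vertices share an $x$-value then forces the sums $x^i_l+y^i_l$ apart.

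I expect the main obstacle to be precisely this cross-component comparison among odd components: two vertices lying in distinct $C^*_i$ and $C^*_j$ could a priori produce the same $x+y$, and excluding this is exactly what the delicate interleaving inequalities of Claim~\ref{c3} are built for. Once those inequalities are combined with the per-component monotonicity coming from the wave order and with the even/odd separation above, all real vertex sums are seen to be distinct, which completes the proof.
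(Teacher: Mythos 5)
Your skeleton is the paper's: the sign split via out-degree (out-degree $0$ gives $S(u)=x+y>0$, out-degree $2$ gives $S(u)=-(x+y)<0$, and $S(v^i_1)=i-(s+2i-1)=1-s-i$) is exactly the paper's Observation 1 together with its remark that two colliding vertices must have the same out-degree; your magnitude bounds isolating the $v^i_1$ and separating the even components block-by-block are its Observations 2 and 3; and the within-even-component case is, as in the paper, delegated to the argument of \cite{lt}. So far this is sound and essentially identical in route.

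The genuine gap is in the step you yourself call the crux. The engine of the odd-component argument is companion monotonicity: if $u$ sees $(a,b)$, $v$ sees $(c,d)$ and $a<c$, then by the greedy rule in steps 6--7 the label $b$ is placed before $d$, hence $b<d$ and $a+b\neq c+d$. This implication is \emph{not} valid at the middle vertices $v^i_{\lceil n_i/2\rceil}$, where the two inward-moving waves meet: the middle vertex's larger label arrives only when the last good path of its own cycle is labelled, which can invert the order relative to vertices of other components. Your escape clause --- ``the few cases where two vertices share an $x$-value'' --- is vacuous and misdiagnoses the difficulty: two real vertices of the same sign are never incident to a common arc (consecutive real vertices joined by a single arc get opposite out-degrees, hence opposite signs), so same-sign vertices always have distinct $x$-values, and the exceptional cases are the middle vertices, not shared $x$-values. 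The paper's entire case analysis lives exactly there: if $u=v^i_{\lceil n_i/2\rceil}$ and $v=v^j_{\lceil n_j/2\rceil}$ it uses the wave order to force $n_i=n_j$ and then Claim \ref{c3} to get $a<c$ and $b<d$; if exactly one of the two is a middle vertex it uses Claim \ref{c3} to pin the other's index to $\lceil n_i/2\rceil$ or $n_j-\lceil n_i/2\rceil+2$ and kills the latter possibility by out-degree parity (i.e., by your sign split). None of these forcing arguments appears in your sketch, so the decisive step of the proof is asserted rather than proved.
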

\begin{proof}
Recall that $V_R^i$ is the set of real vertices in $C^*_i$ and $|V_R^i|=n_i$ for each $i\in [t]$. Let $X=\{v^i_1| i\in [s]\}$ and $Y=\bigcup\limits^s_{i=1}V_R^i-X$. Firstly, we have the following intuitive observations.
\begin{enumerate}
  \item  For each $v^i_1\in X$, $S(v^i_1)=-i-s+1$.
  \item  For each $u\in Y$, $|S(u)|\geq 3s+1$.
  \item  For any real vertices $u\in V_R^i, v\in V_R^j$ and $w\in V_R^k$ with $i<s+1\leq j<k\leq t$, we have $|S(u)|<|S(v)|<|S(w)|$.
\end{enumerate}

\medskip
Actually, it is not difficult to prove that for any real vertices $u, v\in V_R^k$ with $k>s$, we have $S(u)\neq S(v)$, and the proof can be found in \cite{lt}. Hence, it remains to prove that no two real vertices in $Y$ have the same vertex sum.

Suppose to the contrary there exist two real vertices $u,v\in Y$ with the same vertex sum and assume that $u$ sees $(a,b)$, $v$ sees $(c,d)$ with $a+b=c+d$.
Remember that $u,v$ must have the same outdegree $2$ or $0$.

If $u,v\in V_R^i$ for some $i\in [s]$, then $u,v$ cannot be two consecutive real vertices in $C^*_i$. By symmetry, we assume $a<c$, then by \textbf{steps} $6-7$, the number $b$ will be used prior to $d$, thus $b<d$, a contradiction since $a+b=c+d$.

Suppose $u\in V_R^i, v\in V_R^j$ with $i<j\leq s$, we consider the following cases depending on the locations of $u,v$.

If $u\neq v^i_{\lceil n_i/2\rceil}$ and $v\neq v^j_{\lceil n_j/2\rceil}$, then $a<c$ (or $c<a$) implies that $b$ ($d$) is used prior to $d$ ($b$), thus $b<d$ ($d<b$), a contradiction.

If $u=v^i_{\lceil n_i/2\rceil}$ and $v=v^j_{\lceil n_j/2\rceil}$, then we claim that $n_i=n_j$, in fact, $n_i<n_j$ implies $\lceil n_i/2\rceil<\lceil n_j/2\rceil$ and it follows that $a,b$ are used prior to $b,d$, i.e., $a<b<c<d$, a contradiction.
Since $i<j$ and $n_i=n_j$, we have $a<c$ and $x^i_{\lceil n_i/2\rceil+1}<x^j_{\lceil n_j/2\rceil+1}$ by Claim \ref{c3}. Thus $b$ is used in \textbf{step} $7$ prior to $d$, i.e., $b<d$, a contradiction.

If $u=v^i_{\lceil n_i/2\rceil}$ and $v=v^j_{l}$ with $l\neq\lceil n_j/2\rceil$, then we claim that $l=\lceil n_i/2\rceil$. To see this, by Claim \ref{c3}, we observe that either $l=\lceil n_i/2\rceil$ or $l=n_j-\lceil n_i/2\rceil+2$ and since $v^j_{n_j-\lceil n_i/2\rceil+2}$ and $v^i_{\lceil n_i/2\rceil}$ do not have the same outdegree in $D$, the case $l=n_j-\lceil n_i/2\rceil+2$ cannot happen.
So we have $x^i_{\lceil n_i/2\rceil+1}<x^i_{\lceil n_i/2\rceil}=a<c$, and it follows that $b$ is used in \textbf{step} $7$ prior to $d$, i.e., $b<d$, a contradiction.

If $u=v^i_{l}$ with $l\neq\lceil n_i/2\rceil$ and $v=v^j_{\lceil n_j/2\rceil}$, then by Claim \ref{c3}, we must have $n_i=n_j$ and $l=\lceil n_i/2\rceil+1$. By the orientation $D$, we can easily observe that $u,v$ do not have the same out-degree, a contradiction. This completes the proof of Claim \ref{c2}.
\end{proof}


\noindent \textbf{Remark.} As we present an orientation and an antimagic labelling explicitly, the proof of Theorem \ref{main} is elementary. Claim \ref{c1} turns out to be very helpful, that is, we could pick three real vertices and control the length of path between any two consecutive real vertices to some extent.
Applying the same arguments in the proof of Theorem \ref{main}, we can prove a more general result.
\begin{corollary}
Let $d\ge2$ be an integer. If every vertex of a graph $G$  has degree $2d$ or $2d+2$,  then $G$ admits an antimagic orientation.
\end{corollary}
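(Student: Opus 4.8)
The plan is to rerun the construction from the proof of Theorem~\ref{main} almost verbatim, since the only structural input it uses is that every vertex of $G$ has even degree. Each component $G_i$ therefore still admits an Euler tour by Theorem~\ref{euler}, which I again unfold into a simple cycle $C^*_i$ of length $e(G_i)$. The sole change is the number of copies: the tour now visits a vertex $u$ exactly $d_G(u)/2$ times, so $u$ has $d_G(u)/2$ copies in $C^*_i$, of which I designate one real and the rest imaginary. Since $d\ge 2$ still forces $n_i\ge 5$ and gives every vertex at least $d\ge 2$ copies, Claim~\ref{c1} and the choice of the starting triple carry over unchanged; and because the orientation rule (outdegree $0$ or $2$ at each real vertex, with the single defect at $v^i_1$ in an odd component) depends only on the parity of the number $n_i$ of real vertices and not on regularity, I can orient the cycles and label their arcs by the same Algorithm~1.

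First I would recompute the contribution of an imaginary copy. Each one still lies interior to a good path whose arcs carry consecutive labels oriented consistently, so it still contributes exactly $-1$ to the vertex-sum. Hence the final vertex-sum of $u$ in $G$ equals $S(u)-(d_G(u)/2-1)$, where $S(u)$ is the sum recorded at the real copy of $u$ in the digraph $D$. This offset is $-(d-1)$ for a degree-$2d$ vertex and $-d$ for a degree-$(2d+2)$ vertex, so it is no longer uniform. Proving the final sums pairwise distinct thus splits into two tasks: for two vertices of equal degree it is exactly Claim~\ref{c2}; for a degree-$2d$ vertex $u$ and a degree-$(2d+2)$ vertex $v$ a coincidence of final sums is equivalent to $S(v)=S(u)+1$, i.e. to the two $D$-sums differing by exactly one.

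The hard part will be this new off-by-one case, which is genuinely more than a free reuse of Claim~\ref{c2}. Writing $u$ sees $(a,b)$ and $v$ sees $(c,d')$, the greedy labelling gives the implication ``smaller first label $\Rightarrow$ smaller partner,'' hence $a<c\Rightarrow b<d'$. When both vertices have outdegree $2$ this disposes of the relation $S(v)=S(u)+1$ in the sub-direction $a<c$ (it forces $a+b<c+d'$), but the complementary sub-direction only yields $a+b>c+d'$, which does not by itself exclude a unit gap; so roughly half of the comparisons, together with the boundary cases, need an extra argument. The genuinely delicate point is the set $X=\{v^i_1:i\in[s]\}$ of starting vertices, whose $D$-sums $-i-s+1$ are consecutive integers, so that a degree-$(2d+2)$ starter in component $i$ and a degree-$2d$ starter in component $i+1$ collide after the two offsets are applied. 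To kill this I would use the freedom, supplied by Claim~\ref{c1} and by renaming, to choose the starting vertex inside each odd component and to order the odd components so that the degrees of the starters never drop from $2d+2$ to $2d$ at consecutive indices, and I would settle the remaining cross-type comparisons by outdegree parity exactly as in the last cases of Claim~\ref{c2}, using that every starter has outdegree $1$ while every other real vertex has outdegree $0$ or $2$. Verifying that these refinements always succeed is where I expect the real effort to lie. Finally, since the reduction to $s\ge 3$ in the proof of Theorem~\ref{main} invoked Theorem~\ref{li}(b) --- a statement about regular graphs --- for the corollary I would also check the construction directly when $s\le 2$, which should raise no new difficulty.
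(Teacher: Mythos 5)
The paper offers no actual proof of this corollary --- it is stated after the remark ``Applying the same arguments in the proof of Theorem~\ref{main}, we can prove a more general result,'' with all details suppressed --- so your plan of rerunning the construction is exactly the intended route, and your bookkeeping is correct: a vertex $u$ now has $d_G(u)/2$ copies on its cycle $C^*_i$, each imaginary copy still contributes $-1$, the final sum is $S(u)-(d_G(u)/2-1)$, and hence a collision between a degree-$2d$ vertex $u$ and a degree-$(2d+2)$ vertex $v$ is equivalent to $S(v)=S(u)+1$. Your diagnosis of where this bites is also the right one, and it is to your credit that you found it, since the paper's one-line remark glosses over it: the starters $v^i_1\in X$ have the \emph{consecutive} sums $S(v^i_1)=-s-i+1$, so a degree-$(2d+2)$ starter at index $i$ and a degree-$2d$ starter at index $i+1$ collide after the offsets are applied. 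One place where you are over-cautious: the within-$Y$ off-by-one case needs no extra argument. The greedy labelling gives the partner inequality in \emph{both} directions ($a<c$ forces $b<d'$, and $c<a$ forces $d'<b$), so for two vertices seen with the same outdegree the sums satisfy $|(a+b)-(c+d')|\ge 2$ in either case, and in the exceptional meeting-point cases of Claim~\ref{c2} one even gets $a<b<c<d'$; so a unit gap is automatically excluded wherever the existing argument applies, and likewise the $X$-versus-$Y$ and odd-versus-even comparisons are safe because those sums are separated by more than $1$. The genuine danger is confined to $X$, exactly as you isolated it.

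The gap is in your repair of that case. You propose to choose the starter inside each odd component and to order the odd components so that the starter degrees never drop from $2d+2$ to $2d$ at consecutive indices, but this is in tension with the ordering $n_1\le n_2\le\cdots\le n_s$, which the labelling analysis genuinely uses: Claim~\ref{c3} and the meeting-point cases of Claim~\ref{c2} (e.g.\ ``$n_i<n_j$ implies $\lceil n_i/2\rceil<\lceil n_j/2\rceil$, hence $a<b<c<d$'') rely on the index order agreeing with the size order, because the round-robin spreading of labels in \textbf{steps} $5$--$9$ is synchronized with the indices. The conflict is not hypothetical: if a $(2d+2)$-regular odd component is strictly smaller than a $2d$-regular odd component (for $d=2$, take $K_7$ together with a $4$-regular graph on $9$ vertices), then every size-compatible order exhibits the forbidden descent, and both starters' degrees are forced, so the freedom from Claim~\ref{c1} and from mixed components gives you no slack. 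To close this you would have to either reprove Claim~\ref{c3} for an arbitrary ordering of the odd components, or decouple the starter sums by altering the initial labels or orientations so that the $X$-sums are no longer consecutive --- and you have done neither; this is precisely the verification you deferred with ``where I expect the real effort to lie.'' Your final remark is correct and worth keeping: since Theorem~\ref{li}(b) concerns regular graphs, the reduction to $s\ge 3$ is unavailable here and the cases $s\le 2$ must be checked directly within the construction.
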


\noindent \textbf{Acknowledgement.}
The author would like to thank Tong Li, Zi-Xia Song and Guanghui Wang for helpful discussion. This work was supported by the National Natural Science Foundation of China (11871311).


\end{document}